\theoremstyle{plain}
\newtheorem{theorem}{Theorem}
\newtheorem{lemma}[theorem]{Lemma}
\newtheorem{proposition}[theorem]{Proposition}
\newtheorem{question}{Question}
\newtheorem{thmm}{Theorem}
\numberwithin{equation}{section}
\begin{document}

\title[A generalization of a Baire theorem]{A generalization of a Baire theorem concerning barely continuous functions}
\author{Olena Karlova${}^{1,2}$}

\address{${}^1$  Chernivtsi National University, Ukraine\\ ${}^2$ Jan Kochanowski University in Kielce, Poland}
\email{maslenizza.ua@gmail.com}

\subjclass[2010]{Primary 54C30, 26A21; Secondary 54C50}
\keywords{fragmented function,  Baire-one function, $F_\sigma$-measurable function, $\sigma$-dicrete function}

\begin{abstract}
  We prove that if $X$ is a paracompact space, $Y$ is a metric space and $f:X\to Y$ is a functionally fragmented map, then
(i) $f$ is $\sigma$-discrete and functionally $F_\sigma$-measurable; (ii)  $f$ is a Baire-one function, if $Y$ is weak adhesive and weak locally adhesive   for $X$; (iii) $f$ is countably functionally fragmented, if $X$ is Lindel\"{o}ff.

This result generalizes one   theorem of Rene Baire on classification of barely continuous functions.
\end{abstract}
\maketitle

\section{Introduction}

A map $f:X\to Y$ between topological spaces $X$ and $Y$ is said to be
\begin{itemize}
 \item[-] {\it  Baire-one}, if it is a pointwise limit of a sequence of continuous maps $f_n:X\to Y$;

 \item[-] {\it (functionally) $F_\sigma$-measurable} or {\it of the first (functional) Borel class}, if the preimage $f^{-1}(V)$ of any open set $V\subseteq Y$ is a union of a sequence of (functionally) closed sets in $X$;

 \item[-] \emph{barely continuous}, if the restriction $f|_F$ of $f$ to any non-empty closet set $F\subseteq X$ has a point of continuity.
\end{itemize}
Let us observe that the term ''barely continuous'' belongs to Stephens~\cite{steph}. However,  barely continuous
functions are also mentioned in literature as functions with the ''point of continuity property'' (see, for instance, \cite{Kum, Spurny}).

Among many other characterizations of Baire-one functions, the following classical Baire's theorem is well-known~\cite{Baire}.
\begin{thmm}
  For a complete metric space $X$ and a function $f:X\to\mathbb R$ the following conditions are equivalent:
\begin{enumerate}
  \item $f$ is Baire-one;

  \item $f$ is $F_\sigma$-measurable;

  \item $f$ is barely continuous.
\end{enumerate}
\end{thmm}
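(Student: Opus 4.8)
The plan is to establish the cycle of implications $(1)\Rightarrow(2)\Rightarrow(3)\Rightarrow(1)$. The implication $(1)\Rightarrow(2)$ is a direct computation: if $f=\lim_n f_n$ pointwise with each $f_n\colon X\to\mathbb R$ continuous, then for every $a\in\mathbb R$
\[
f^{-1}\big((a,\infty)\big)=\bigcup_{k\ge 1}\ \bigcup_{m\ge 1}\ \bigcap_{n\ge m} f_n^{-1}\big([a+\tfrac1k,\infty)\big),
\]
a countable union of closed sets; symmetrically $f^{-1}\big((-\infty,b)\big)$ is $F_\sigma$. Since every open $V\subseteq\mathbb R$ is a countable union of bounded intervals $(a,b)=(a,\infty)\cap(-\infty,b)$, and the family of $F_\sigma$ sets is closed under finite intersections and countable unions, $f^{-1}(V)$ is $F_\sigma$ for each open $V$.

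The implication $(2)\Rightarrow(3)$ rests on the Baire category theorem. Let $\emptyset\ne F\subseteq X$ be closed; being a closed subspace of a complete metric space, $F$ — and likewise every nonempty closed subspace of $F$ — is completely metrizable, hence a Baire space. The restriction $g=f|_F$ is again $F_\sigma$-measurable. For $n\in\mathbb N$ put
\[
U_n=\{x\in F:\ x\text{ has a neighbourhood }W\text{ in }F\text{ with }\operatorname{diam}g(W)<\tfrac1n\},
\]
an open subset of $F$; it suffices to show each $U_n$ is dense in $F$, for then $\bigcap_n U_n$ is a dense $G_\delta$ and $g$ is continuous at each of its points. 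Fix a nonempty open $W_0\subseteq F$ and $n\in\mathbb N$, and cover $\mathbb R$ by countably many open intervals $I_j$ of length $<1/n$. Then $\overline{W_0}=\bigcup_j\big(g^{-1}(I_j)\cap\overline{W_0}\big)$ is a countable union of sets closed in the Baire space $\overline{W_0}$, so one of them has nonempty interior there; intersecting it with $W_0$ yields a nonempty open $O\subseteq W_0$ with $\operatorname{diam}g(O)<1/n$, i.e.\ $O\subseteq U_n$. Hence $U_n$ is dense.

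The implication $(3)\Rightarrow(1)$ is the heart of the matter, and I would route it through $(3)\Rightarrow(2)\Rightarrow(1)$. For $(3)\Rightarrow(2)$, fix $\varepsilon>0$ and build a strictly decreasing transfinite sequence of closed sets $F_0=X\supseteq F_1\supseteq\dots$: given $F_\alpha\ne\emptyset$, bare continuity supplies a point $x_\alpha\in F_\alpha$ at which $f|_{F_\alpha}$ is continuous, hence a relatively open $W_\alpha\ni x_\alpha$ in $F_\alpha$ with $\operatorname{diam}f(W_\alpha)<\varepsilon$; set $F_{\alpha+1}=F_\alpha\setminus W_\alpha$ and $F_\lambda=\bigcap_{\alpha<\lambda}F_\alpha$ at limit stages. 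The points $x_\alpha$ are pairwise distinct, so the construction reaches $F_\gamma=\emptyset$ at some ordinal $\gamma$, and the sets $W_\alpha$ ($\alpha<\gamma$) partition $X$, each $W_\alpha$ being relatively open in $F_\alpha=X\setminus\bigcup_{\beta<\alpha}W_\beta$. Defining $g_\varepsilon(x)=f(x_\alpha)$ for $x\in W_\alpha$ gives $|g_\varepsilon-f|<\varepsilon$ on $X$, and the stratified structure of the partition makes $g_\varepsilon$ $F_\sigma$-measurable (and, exploiting paracompactness of the metric space $X$, $\sigma$-discretely decomposable). Since $g_{2^{-k}}\to f$ uniformly and $F_\sigma$-measurability is preserved under uniform limits, $f$ is $F_\sigma$-measurable. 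For $(2)\Rightarrow(1)$ — the Lebesgue--Hausdorff theorem — a standard reduction by uniform approximation leaves one to check that the characteristic function of a subset of $X$ which is simultaneously $F_\sigma$ and $G_\delta$ is Baire-one; this follows by separating the two associated nested sequences of closed sets by Urysohn functions.

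The step I expect to be the main obstacle is the passage $(2)\Rightarrow(1)$. Bare continuity together with the Baire category theorem delivers first-Borel-class information about $f$ fairly painlessly, but converting a merely $F_\sigma$-measurable real-valued function into an honest pointwise limit of \emph{continuous} functions is where the metrizability of both $X$ and the target $\mathbb R$ is used essentially — via Urysohn separation and partitions of unity. Completeness of $X$, by contrast, enters only in $(2)\Rightarrow(3)$, to furnish the Baire property on $F$ and all of its closed subspaces. In a full write-up I would either cite the Lebesgue--Hausdorff theorem directly or include the short argument indicated above.
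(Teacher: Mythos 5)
The paper itself offers no proof of this statement: it is the classical Baire theorem, quoted from \cite{Baire} as motivation, so there is nothing to compare your argument with line by line; I can only judge it on its own merits and against the machinery the paper builds to generalize it. Your cycle $(1)\Rightarrow(2)\Rightarrow(3)\Rightarrow(2)\Rightarrow(1)$ is the standard classical route, and most of it is sound. In $(1)\Rightarrow(2)$ the displayed formula is correct. In $(2)\Rightarrow(3)$ there is a small slip: the sets $g^{-1}(I_j)\cap\overline{W_0}$ are not closed but only $F_\sigma$ in $\overline{W_0}$; you must first expand each as a countable union of closed sets before applying the Baire category theorem. This is a one-line repair and the conclusion stands. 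The step $(2)\Rightarrow(1)$ via Lebesgue--Hausdorff (reduction to ambiguous sets, Urysohn/Tietze, uniform limits of Baire-one real functions) is legitimate for an arbitrary metric domain, since the target $\mathbb R$ is separable; citing it is acceptable.

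The genuine gap is in $(3)\Rightarrow(2)$, at the sentence claiming that ``the stratified structure of the partition makes $g_\varepsilon$ $F_\sigma$-measurable.'' The preimage $g_\varepsilon^{-1}(V)$ is an \emph{arbitrary} union of layers $W_\alpha=U_{\alpha+1}\setminus U_\alpha$ of a regular transfinite sequence of open sets (in the sense of Proposition~\ref{prop:KM:arxiv}). Each single layer is $F_\sigma$ in a metric space, but the statement concerns an arbitrary, not necessarily separable, complete metric space $X$, so the sequence may have uncountable length and an uncountable union of $F_\sigma$ sets need not be $F_\sigma$. What is needed is that the family of layers is $\sigma$-discretely decomposable into closed pieces, so that an arbitrary subunion becomes a countable union of closed sets; this is Montgomery's lemma for metric spaces, and it is exactly the content (in the paracompact, functionally open setting) of Lemma~\ref{prop:sigmadd}, following \cite{ChZen}. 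Your parenthetical remark ``exploiting paracompactness of the metric space $X$'' names the right phenomenon but supplies no argument, and this decomposition is the technical heart of the implication --- indeed of the whole paper, whose Theorem~\ref{th:bafs} is precisely the generalization of this step. If $X$ were separable the gap would close trivially (the transfinite sequence is then countable), but for the theorem as stated you must either prove the $\sigma$-discrete decomposition or cite it explicitly.
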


Recall that a map  $f:X\to Y$ between topological space   $X$ and a metric space  $Y$ is said to be   {\it fragmented}, if for all  $\varepsilon>0$ and nonempty closed set   $F\subseteq X$ there exists a relatively open set   $U\subseteq F$ such that
${\rm diam}f(U)<\varepsilon$. The above notion  was supposed by Jayne and Rogers~\cite{JR} in connection with Borel selectors of certain set-valued maps.

Evidently, every barely continuous map between a topological and a metric spaces is fragmented. Moreover, if $X$ is a hereditarily Baire space, then every fragmented function is barely continuous.  The property of baireness of $X$ is essential: let us consider a function   $f:\mathbb Q\to\mathbb R$, $f(r_n)=1/n$, where $\mathbb Q=\{r_n:n\in\mathbb N\}$ is the set of all rational numbers. Notice that $f$ is fragmented and everywhere discontinuous.

The next generalization of Baire's theorem follows from \cite[Corollary 7]{JOPV} and \cite[Theorem 2.1]{ACN}.

\begin{thmm}
  Let $X$ be a hereditarily Baire  paracompact perfect space,  $Y$ is a metric space and $f:X\to Y$. The following conditions are equivalent:
   \begin{enumerate}
     \item[(i)] $f$ is $F_\sigma$-measurable and $\sigma$-discrete;

     \item[(ii)] $f$ is fragmented.
   \end{enumerate}
   Moreover, if $Y$ is a convex subset of a Banach space, they are equivalent to:
    \begin{enumerate}
      \item[(iii)] $f$ is Baire-one.
    \end{enumerate}
\end{thmm}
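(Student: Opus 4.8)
The plan is to prove $(\mathrm{ii})\Rightarrow(\mathrm{i})\Rightarrow(\mathrm{ii})$ first, and then, under the extra hypothesis on $Y$, to close the loop with $(\mathrm{iii})\Rightarrow(\mathrm{i})$ and $(\mathrm{i})\Rightarrow(\mathrm{iii})$; the bulk of the work is already isolated in \cite[Corollary 7]{JOPV} and \cite[Theorem 2.1]{ACN}, so the task is mostly to assemble these and to indicate the proofs.

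\emph{$(\mathrm{i})\Rightarrow(\mathrm{ii})$.} This is the elementary direction and uses only that $X$ is hereditarily Baire, exactly as in the $\mathbb Q$-example of the Introduction. Fix a nonempty closed $F\subseteq X$ and $\varepsilon>0$, cover $Y$ by a sequence $(B_n)$ of open $(\varepsilon/2)$-balls, and use $F_\sigma$-measurability of $f$ to write each $F\cap f^{-1}(B_n)$ as a countable union of relatively closed subsets of $F$. Then $F$ is covered by countably many relatively closed sets; since $F$ is closed in a hereditarily Baire space, it is itself Baire, so one of these sets has nonempty interior $U$ in $F$, and $\operatorname{diam}f(U)\le\varepsilon$. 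Hence $f$ is fragmented. (Neither perfectness nor $\sigma$-discreteness is needed here.)

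\emph{$(\mathrm{ii})\Rightarrow(\mathrm{i})$.} This is the heart of the matter. For a fixed $n$ I would run the standard transfinite fragmentation: set $U_0=\varnothing$, $U_\lambda=\bigcup_{\alpha<\lambda}U_\alpha$ at limit $\lambda$, and, as long as $U_\alpha\neq X$, pick by fragmentability an open $W_\alpha\subseteq X$ meeting $X\setminus U_\alpha$ with $\operatorname{diam}f\bigl(W_\alpha\setminus U_\alpha\bigr)<2^{-n}$ and put $U_{\alpha+1}=U_\alpha\cup W_\alpha$. The chain is strictly increasing until it stabilizes, which for cardinality reasons happens at some ordinal, and the only way to stop is $U_\gamma=X$; hence the differences $H^n_\alpha=U_{\alpha+1}\setminus U_\alpha$ partition $X$ into sets of $f$-diameter $<2^{-n}$, each the intersection of an open and a closed set. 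Using that $X$ is perfect and paracompact one upgrades every $H^n_\alpha$ to a functionally $F_\sigma$ set, and --- this is the one genuinely nontrivial step --- one replaces the \emph{transfinite} chain by a \emph{$\sigma$-discrete} family: by the shrinking lemma for well-ordered open covers in a paracompact space one extracts from $\{U_{\alpha+1}\setminus U_\alpha\}_\alpha$ a $\sigma$-discrete family of functionally $F_\sigma$ sets that still covers $X$ by pieces of $f$-diameter $<2^{-n}$, as in \cite[Corollary 7]{JOPV}. Taking the union of these families over $n\in\mathbb N$ produces a $\sigma$-discrete network for $f$ consisting of functionally $F_\sigma$ sets; this witnesses at once that $f$ is $\sigma$-discrete and, since the network refines every covering of $Y$ by open balls, that $f^{-1}(V)$ is $F_\sigma$ for each open $V\subseteq Y$.

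\emph{The convex case.} Assume now $Y$ is a convex subset of a Banach space. The implication $(\mathrm{iii})\Rightarrow(\mathrm{i})$ is routine: a pointwise limit of continuous maps into a metric space is $F_\sigma$-measurable, and on a paracompact domain it is $\sigma$-discrete, because one decomposes the approximating sequence against a $\sigma$-discrete open refinement of each ball-covering of $Y$. The substantive implication, and the step I expect to be the real obstacle, is $(\mathrm{i})\Rightarrow(\mathrm{iii})$; here I would appeal to \cite[Theorem 2.1]{ACN}. From the $\sigma$-discrete functionally $F_\sigma$ decompositions of $f$ at the scales $2^{-n}$ one expands each member to a cozero set of still-controlled $f$-oscillation, chooses by paracompactness a locally finite partition of unity $(\varphi^n_i)_i$ subordinate to the scale-$n$ expansion, picks a value $y^n_i\in f(H^n_i)$ in each piece, and sets $f_n=\sum_i\varphi^n_i y^n_i$. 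Convexity of $Y$ forces $f_n(X)\subseteq Y$, local finiteness makes $f_n$ continuous, and the diameter control yields $d\bigl(f_n(x),f(x)\bigr)\to0$ for every $x$, so $f$ is Baire-one. The delicate points --- and the reasons the earlier steps must be carried out \emph{functionally} rather than merely with $F_\sigma$ sets --- are that the expansions have to be genuine cozero sets (so that the partitions of unity are honestly continuous) and that the $f$-oscillation over the expanded pieces cannot be made uniformly small, which forces the $f_n$ to be built by a recursion interlocking the decompositions at scales $\le n$ rather than using scale $n$ alone; these are exactly the technicalities resolved in \cite[Theorem 2.1]{ACN}.
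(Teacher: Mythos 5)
Your overall architecture matches the paper's: the paper offers no proof of this theorem at all, stating that it follows from \cite[Corollary 7]{JOPV} and \cite[Theorem 2.1]{ACN}, so your decision to delegate the two hard steps --- the conversion of the transfinite fragmentation chain into a $\sigma$-discrete family of $F_\sigma$ sets in (ii)$\Rightarrow$(i) (this is exactly the Chaber--Zenor-type argument the paper reworks as Lemma~\ref{prop:sigmadd}), and the construction of the approximating sequence in (i)$\Rightarrow$(iii) --- to those same two references is the same route, and at the level of detail the paper itself adopts those sketches are acceptable.

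There is, however, a genuine gap in your (i)$\Rightarrow$(ii). You cover $Y$ by a \emph{sequence} of $(\varepsilon/2)$-balls, which presupposes that $Y$ is separable, and your parenthetical claim that ``$\sigma$-discreteness is not needed here'' is precisely what fails in general: for nonseparable $Y$ (the only case in which the $\sigma$-discreteness hypothesis in (i) is not automatic) the balls of radius $\varepsilon/2$ form an uncountable cover and the Baire category argument cannot be launched from $F_\sigma$-measurability alone. The repair has to use $\sigma$-discreteness essentially: the members of a $\sigma$-discrete base $\bigcup_n\mathscr B_n$ that lie in preimages of $(\varepsilon/2)$-balls still cover $X$ and have $f$-image of diameter $\le\varepsilon$; combining the two hypotheses (e.g.\ via Hansell's theorem that a $\sigma$-discrete map of the first Borel class admits a $\sigma$-discrete base of $F_\sigma$ sets) one may take these members to be $F_\sigma$, write each as a countable union of closed sets, and observe that for fixed $n$ and fixed index $k$ the union of the $k$-th closed pieces over the discrete family $\mathscr B_n$ is closed. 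This exhibits the closed set $F$ as a countable union of relatively closed sets; Baireness of $F$ gives one with nonempty relative interior $U$, and discreteness then localizes a nonempty relatively open subset of $U$ inside a single member, whose image has diameter $\le\varepsilon$. A smaller remark of the same kind: in (iii)$\Rightarrow$(i) the $\sigma$-discreteness of a pointwise limit of continuous maps is itself a nontrivial theorem (again Hansell), not the one-line decomposition you gesture at; if you only need the equivalences as stated, it is simpler to prove (iii)$\Rightarrow$(ii) directly, since for $f$ Baire-one and $F\subseteq X$ closed the restriction $f|_F$ is Baire-one on the Baire space $F$, hence has a (residual set of) continuity points, so $f$ is barely continuous and therefore fragmented.
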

Let us observe that a similar result for $Y=\mathbb R$ was obtained by Mykhaylyuk \cite{Mykhaylyuk:Visnyk:2000}.

The next theorem was recently  proved in \cite[Theorem 10]{Karlova:Mykhaylyuk:Comp}.
 \begin{thmm}
   If $X$ is a paracompact perfect   space, $Y$ is a metric contractible locally path-connected space and  $f:X\to Y$ is fragmented, then  $f\in{\rm B}_1(X,Y)$.
 \end{thmm}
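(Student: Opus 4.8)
The plan is to realize $f$ as a uniform limit of $\sigma$-discrete step maps and to prove that each such step map already lies in ${\rm B}_1(X,Y)$; since ${\rm B}_1(X,Y)$ is closed under uniform convergence when $Y$ is metric (the classical Hausdorff argument), this gives $f\in{\rm B}_1(X,Y)$. So fix $n\in\mathbb N$ and apply fragmentedness with $\varepsilon=1/n$ via the usual transfinite peeling: set $F_0=X$, at successor stages choose a nonempty relatively open $U_\alpha\subseteq F_\alpha$ with ${\rm diam}\,f(U_\alpha)<1/n$ --- taking $U_\alpha$ maximal with this property --- put $F_{\alpha+1}=F_\alpha\setminus U_\alpha$, and intersect at limits. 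The process stops at some $\gamma$ with $F_\gamma=\emptyset$, so $X=\bigsqcup_{\alpha<\gamma}U_\alpha$ with each $U_\alpha$ relatively open in the closed set $F_\alpha=X\setminus\bigcup_{\beta<\alpha}U_\beta$. Picking $y_\alpha\in f(U_\alpha)$ and letting $f_n:=y_\alpha$ on $U_\alpha$ gives $d(f_n(x),f(x))<1/n$ for all $x$, so it remains to check $f_n\in{\rm B}_1(X,Y)$.

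Next I would record two structural facts about $\{U_\alpha\}$. First, $U_\alpha=W_\alpha\cap F_\alpha$ for an open $W_\alpha\subseteq X$; since $X$ is paracompact and perfect it is perfectly normal, so every closed set is a zero-set, whence $U_\alpha$ is an $F_\sigma$-subset of $X$ and there is a continuous $\varphi_\alpha:X\to[0,1]$ with $F_\alpha=\varphi_\alpha^{-1}(0)$ --- this continuous function will play the role of ``distance to $F_\alpha$'', compensating for the possible non-metrizability of $X$. Secondly, the maximality of the $U_\alpha$ together with paracompactness forces $\{U_\alpha\}$ to be $\sigma$-discretely decomposable (refine by a locally finite open cover and sort the points of $X$ according to the stage at which they are captured). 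Combining the two, $f_n$ is a $\sigma$-discrete map and, as a $\sigma$-discrete union of functionally closed sets is functionally $F_\sigma$, it is also (functionally) $F_\sigma$-measurable; in particular $f$ itself is $\sigma$-discrete and $F_\sigma$-measurable, which recovers the measurable part of Theorem~B without any Baireness assumption.

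To see $f_n\in{\rm B}_1(X,Y)$ I would argue by transfinite induction on the length $\gamma$ of its defining decomposition; the base case $\gamma=1$ is a constant map. For the successor step write $X=U_0\sqcup F_1$ with $U_0$ open, $f_n|_{U_0}\equiv y_0$, and $f_n|_{F_1}\in{\rm B}_1(F_1,Y)$ by the inductive hypothesis applied to the decomposition $\{U_\alpha\cap F_1\}_{\alpha\ge 1}$; pick continuous $g_m:F_1\to Y$ with $g_m\to f_n|_{F_1}$ pointwise. Using a continuous $\varphi:X\to[0,1]$ with $F_1=\varphi^{-1}(0)$ and a contraction $H:Y\times[0,1]\to Y$ of $Y$ to $y_0$ (so $H(\cdot,0)={\rm id}_Y$, $H(\cdot,1)\equiv y_0$), define $h_m:X\to Y$ by $h_m\equiv y_0$ on $\{\varphi\ge 1/m\}$ and $h_m(x)=H\bigl(\widetilde g_m(x),\,m\varphi(x)\bigr)$ on $\{\varphi\le 1/m\}$, where $\widetilde g_m$ is a continuous extension of $g_m$ to the closed set $\{\varphi\le 1/m\}$. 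The two formulas agree on $F_1$ and on $\{\varphi=1/m\}$, $h_m$ is continuous, and $h_m\to f_n$ pointwise; the local path-connectedness of $Y$ is used here to choose the $\widetilde g_m$ (and, at limit ordinals $\gamma$, to glue the countably many discrete ``layers'' of the $\sigma$-discrete decomposition one after another) so that all the connecting paths have small diameter and the construction really converges to $f_n$.

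The hard part will be exactly this collar construction --- turning the $\sigma$-discrete, small-image-diameter data into an honest continuous $Y$-valued map. For $Y$ convex this is a barycentric combination (as in Theorem~B); here convexity is replaced by contractibility, which supplies the homotopy $H$ collapsing the approximations onto a single value, and by local path-connectedness, which keeps the intermediate paths short. One cannot simply invoke an absolute-extensor property of $Y$, since a contractible locally path-connected metric space need not be an ANR; the point is that the \emph{stratified} nature of the decomposition means one only ever extends a continuous map across the collar of a single stratum and then caps it off with the constant $y_0$, so that at no stage is more than the existence of the contraction $H$ and of short connecting paths required. Making this extension-plus-capping step precise, and organizing the transfinite recursion through the countably many layers so that the partial gluings converge, is where the real work of the proof lies; everything else --- the peeling, the zero-set functions $\varphi_\alpha$, the passage to the uniform limit --- is routine.
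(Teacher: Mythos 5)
Your outline has the right skeleton --- approximate $f$ uniformly by maps constant on the pieces of a $1/n$-fragmentation, decompose those pieces $\sigma$-discretely, and use the connectivity of $Y$ to make the step maps Baire-one --- but the steps that carry all the weight are exactly the ones left unproved, and one of them fails as written. The successor step of your induction requires a continuous extension $\widetilde g_m$ of $g_m:F_1\to Y$ over the collar $\{\varphi\le 1/m\}$; a contractible, locally path-connected metric space is not an absolute (neighborhood) extensor, you concede this yourself, and no substitute is supplied --- ``only extending across the collar of a single stratum'' is still an extension problem into a non-extensor range. The organization at limit ordinals (the fragmentation may have arbitrary uncountable length, and its level sets form a transfinite, not countable, family) is only gestured at. Two further points are overstated: ${\rm B}_1(X,Y)$ is \emph{not} closed under uniform convergence for an arbitrary metric $Y$ --- the Hausdorff interpolation needs a convex structure or precisely the path-connectedness hypotheses at hand, so this too needs an argument, not a reference to the real-valued case; and a ``maximal'' $U_\alpha$ with ${\rm diam}\,f(U_\alpha)<1/n$ need not exist (unions of chains only give diameter $\le 1/n$), while the claim that one locally finite refinement ``sorts'' the points into a $\sigma$-discretely decomposable family understates a genuinely delicate argument.

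For comparison: the paper does not reprove this statement directly (it is quoted from \cite[Theorem 10]{Karlova:Mykhaylyuk:Comp}); instead it proves the more general Theorem~\ref{th:bafs}, and that route shows what your plan is missing. Since a perfect paracompact space is perfectly normal, fragmented coincides with functionally fragmented, and Lemma~\ref{prop:sigmadd} --- a Chaber--Zenor-type argument \cite{ChZen} using an \emph{iterated} scheme of locally finite barycentric (star) refinements indexed by $\bigcup_{k}\omega^k$, together with the functionally $G_\delta$ structure of the level sets --- decomposes each family $(U_{\xi+1}\setminus U_\xi)$ into countably many strongly functionally discrete families of closed sets; this is the rigorous version of your ``sorting'' step. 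The Baire-one conclusion is then obtained from the adhesive machinery of \cite[Theorem 3.2]{K:PIGC}: a (locally) path-connected $Y$ is a weak (locally weak) adhesive for $X$, which lets one build continuous maps taking prescribed constant values near the members of a discrete family, joined by short paths --- thereby replacing, rather than performing, the extension-over-collars on which your construction founders. To complete your route you would need precisely such a statement (step maps subordinated to sfd families with small oscillation are Baire-one when $Y$ is a weak and locally weak adhesive for $X$), together with a uniform-limit theorem for ${\rm B}_1(X,Y)$ valid under these connectivity assumptions.
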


The aim of this note is to extend the above mentioned results on maps defined on paracompact spaces which are not necessarily perfect (recall that a topological space is \emph{perfect} if every its open subset is $F_\sigma$).

The convenient tool of investigation of fragmented functions on non-perfect spaces is a concept of functional fragmentability introduced in~\cite{KM:arxiv1}. We prove a technical auxiliary result (Lemma~\ref{prop:sigmadd}) which connects regular families of functionally open sets in   paracompact spaces with the notion of $\sigma$-discrete decomposability. As an application of   this result we obtain (Theorem~\ref{th:bafs}) that for a paracompact space  $X$, a  metric space $Y$ and  a functionally fragmented map $f:X\to Y$ the following propositions hold: (i) $f$ is $\sigma$-discrete and functionally $F_\sigma$-measurable; (ii) $f$ is a Baire-one function, if $Y$ is weak adhesive and weak locally adhesive   for $X$; (iii) $f$ is countably functionally fragmented, if $X$ is Lindel\"{o}ff.

\section{Preliminaries}

Let $\mathscr U=(U_\xi:\xi\in[0,\alpha])$ be  a transfinite sequence of subsets of a topological space $X$. Then $\mathscr U$ is {\it regular in $X$}, if
\begin{enumerate}[label=(\alph*)]
  \item each $U_\xi$ is open in $X$;

  \item $U_0=\emptyset$, $U_\alpha=X$ and $U_\xi\subseteq U_\eta$ for all $0\le\xi\le\eta<\alpha$;

  \item\label{it:c} $U_\gamma=\bigcup_{\xi<\gamma} U_\xi$ for every limit ordinal $\gamma\in[0,\alpha]$.
\end{enumerate}

\begin{proposition}\label{prop:KM:arxiv}\cite[Proposition 1]{KM:arxiv}
  Let $X$ be a topological space, $(Y,d)$ be a metric space and $\varepsilon>0$. For a map $f:X\to Y$ the following conditions are equivalent:
  \begin{enumerate}
    \item\label{prop:char_frag:it:1}  $f$ is $\varepsilon$-fragmented;

    \item\label{prop:char_frag:it:2} there exists a regular sequence $\mathscr U=(U_\xi:\xi\in[0,\alpha])$ in $X$ such that ${\rm diam} f(U_{\xi+1}\setminus U_\xi)<\varepsilon$ for all $\xi\in[0,\alpha)$.
  \end{enumerate}
\end{proposition}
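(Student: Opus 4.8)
The plan is to establish the two implications separately. The implication $(\ref{prop:char_frag:it:2})\Rightarrow(\ref{prop:char_frag:it:1})$ is nearly immediate, whereas $(\ref{prop:char_frag:it:1})\Rightarrow(\ref{prop:char_frag:it:2})$ is obtained by a transfinite recursion that successively ``peels off'' from $X$ open sets on which the oscillation of $f$ is below $\varepsilon$.

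For $(\ref{prop:char_frag:it:2})\Rightarrow(\ref{prop:char_frag:it:1})$, take a nonempty closed set $F\subseteq X$. Since $U_0=\emptyset$ and $U_\alpha=X$, there is a least ordinal $\gamma$ with $F\cap U_\gamma\ne\emptyset$. Property \ref{it:c} of regularity forces $\gamma$ to be a successor, say $\gamma=\xi+1$: if $\gamma$ were a limit ordinal, then $F\cap U_\gamma=\bigcup_{\eta<\gamma}(F\cap U_\eta)$ would be nonempty only if $F$ met some $U_\eta$ with $\eta<\gamma$, contradicting minimality. By minimality also $F\cap U_\xi=\emptyset$, so $U:=F\cap U_{\xi+1}$ is a nonempty relatively open subset of $F$ contained in $U_{\xi+1}\setminus U_\xi$, whence ${\rm diam}\,f(U)<\varepsilon$. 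Thus $f$ is $\varepsilon$-fragmented.

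For $(\ref{prop:char_frag:it:1})\Rightarrow(\ref{prop:char_frag:it:2})$ I would define the open sets $U_\xi$ by recursion. Put $U_0=\emptyset$ and, for a limit ordinal $\gamma$, put $U_\gamma=\bigcup_{\xi<\gamma}U_\xi$. Given $U_\xi$: if $U_\xi=X$, stop and set $\alpha=\xi$; otherwise the set $F=X\setminus U_\xi$ is nonempty and closed, so $\varepsilon$-fragmentability yields an open set $W\subseteq X$ with $F\cap W\ne\emptyset$ and ${\rm diam}\,f(F\cap W)<\varepsilon$, and we set $U_{\xi+1}=U_\xi\cup W$. The crucial identity is
\[
U_{\xi+1}\setminus U_\xi=W\setminus U_\xi=W\cap(X\setminus U_\xi)=F\cap W,
\]
which at once gives the oscillation bound ${\rm diam}\,f(U_{\xi+1}\setminus U_\xi)<\varepsilon$ and the strict inclusion $U_\xi\subsetneq U_{\xi+1}$ (because $F\cap W\ne\emptyset$).

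It remains to check that the recursion stops. If it did not, then $\xi\mapsto U_\xi$ would be a strictly increasing, hence injective, map from the class of all ordinals into $\mathcal P(X)$, which is absurd; so there is an ordinal $\alpha$ at which we halt, and by construction $U_\alpha=X$. The recursion also guarantees that every $U_\xi$ is open, that the sequence is nondecreasing, and that $U_\gamma=\bigcup_{\xi<\gamma}U_\xi$ for every limit $\gamma\le\alpha$, so $\mathscr U=(U_\xi:\xi\in[0,\alpha])$ is regular in $X$ and has the required property. I expect the main (and essentially only) obstacle to be the bookkeeping of this transfinite recursion together with the set-theoretic identity displayed above; verifying the regularity axioms themselves is routine.
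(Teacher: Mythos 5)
Your proof is correct: the reduction of $(\ref{prop:char_frag:it:2})\Rightarrow(\ref{prop:char_frag:it:1})$ to the least ordinal meeting $F$, and the transfinite exhaustion with $U_{\xi+1}=U_\xi\cup W$ and the identity $U_{\xi+1}\setminus U_\xi=F\cap W$ for the converse, are exactly the standard argument. Note that the paper itself gives no proof of this proposition (it is quoted from \cite{KM:arxiv}), and your argument coincides with the usual one behind that cited result, so there is nothing to add beyond the routine verifications you already indicate.
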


If a sequence $\mathscr U$ satisfies condition~(\ref{prop:char_frag:it:2}) of the previous proposition, then it is called {\it $\varepsilon$-associated with $f$} and is denoted by $\mathscr U_\varepsilon(f)$.

We say that an $\varepsilon$-fragmented  map $f:X\to Y$ is {\it functionally $\varepsilon$-fragmented} if $\mathscr U_\varepsilon(f)$ can be chosen such that every set $U_\xi$ is functionally open in $X$. Further, $f$ is {\it functionally fragmented}, if it is functionally $\varepsilon$-fragmented for each $\varepsilon>0$.

A map $f$ is {\it (functionally) countably fragmented}, if $f$ is (functionally) fragmented and every sequence $\mathscr U_\varepsilon$ can be chosen to be countable.

\section{A Lemma}
Let $\mathscr A$ be a family of subsets of a topological space $X$. Then $\mathscr A$ is called
\begin{itemize}
  \item \emph{discrete}, if each point $x\in X$ has a neighborhood which intersects at most one set from $\mathscr A$;

  \item  \emph{strongly functionally discrete} or, briefly, \emph{sfd-family}, if there exists a discrete family $(U_A:A\in\mathscr A)$ of functionally open subsets of $X$ such that $\overline{A}\subseteq U_A$ for every $A\in \mathscr A$.
\end{itemize}

Let us observe that every discrete family is strongly functionally discrete in collectionwise normal space.

\begin{lemma}\label{prop:sigmadd}{\rm (cf. \cite[Theorem 2]{ChZen})}
  Let $\mathscr U=(U_\xi:\xi\in[0,\alpha])$ be a regular  family   of  functionally open sets in a paracompact space $X$. Then there exists a sequence $(\mathscr F_n)_{n\in\omega}$ of families $\mathscr F_n=(F_{\xi,n}:\xi\in[0,\alpha])$ such that
  \begin{enumerate}
\item    $U_\xi\setminus \bigcup_{\eta<\xi}U_\eta=\bigcup_{n\in\omega}F_{\xi,n}$ for all $\xi\in[0,\alpha)$,\label{lem:sigmadd:it1}

\item   $\mathscr F_n$ is an sfd-family in $X$ for all $n\in\omega$,\label{lem:sigmadd:it2}

\item $F_{\xi,n}$ is closed in $X$ for all $n\in\omega$ and $\xi\in[0,\alpha)$.\label{lem:sigmadd:it3}
  \end{enumerate}
\end{lemma}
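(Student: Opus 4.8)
\emph{Reduction to a single index.} By condition \ref{it:c}, for a limit ordinal $\gamma$ one has $U_\gamma=\bigcup_{\xi<\gamma}U_\xi$, so the ``layer'' $L_\xi:=U_\xi\setminus\bigcup_{\eta<\xi}U_\eta$ is empty unless $\xi$ is a successor $\zeta+1$, in which case $L_{\zeta+1}=U_{\zeta+1}\setminus U_\zeta$. Fix continuous $g_\xi\colon X\to[0,1]$ with $U_\xi=g_\xi^{-1}((0,1])$, available since each $U_\xi$ is functionally open. Then
$$L_{\zeta+1}=\bigl\{g_{\zeta+1}>0\bigr\}\cap\bigl\{g_\zeta=0\bigr\}=\bigcup_{m\ge1}\Bigl(\bigl\{g_{\zeta+1}\ge\tfrac1m\bigr\}\cap\bigl\{g_\zeta=0\bigr\}\Bigr),$$
a countable union of functionally closed subsets of $X$. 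Consequently, any choice of the $F_{\zeta+1,n}$ among finite intersections of the sets $\{g_{\zeta+1}\ge\frac1m\}\cap\{g_\zeta=0\}$ (and $F_{\xi,n}=\emptyset$ for non-successor $\xi$) already yields \ref{lem:sigmadd:it1} and \ref{lem:sigmadd:it3}; moreover, by monotonicity of $\mathscr U$ a point of $L_{\zeta+1}$ lies in no other layer, so every such family is automatically pairwise disjoint. Thus the entire content is \ref{lem:sigmadd:it2}. Here I may weaken ``sfd'' to ``$\sigma$-discrete'': re-splitting the index $m$ turns a $\sigma$-discrete family into a sequence of discrete ones, and since $X$ is paracompact, hence collectionwise normal, every discrete family of closed sets expands to a discrete family of functionally open sets (shrink, then apply Urysohn), i.e.\ is an sfd-family.

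\emph{Bringing in paracompactness.} The key point is that no pointwise choice of the $g_\xi$ suffices: one cannot in general take the $g_\xi$ monotone in $\xi$ (otherwise the ``layer index'' function would be nearly continuous and the statement trivial), and for an arbitrary choice the pieces $\{g_{\zeta+1}\ge\frac1m\}\cap\{g_\zeta=0\}$ pile up at boundary points $x\in\overline{U_\gamma}\setminus U_\gamma$ with $\gamma$ limit, wrecking discreteness. So I use that $\mathscr U$ provides the increasing cozero cover $(U_{\xi+1})_{\xi<\alpha}$ of $X$: take a locally finite cozero refinement $\mathscr V=(V_s)_{s\in S}$ of it together with a functionally closed shrinking $(A_s)_{s\in S}$ ($A_s\subseteq V_s$, $\bigcup_s A_s=X$), and set $\xi(s)=\min\{\zeta\colon V_s\subseteq U_{\zeta+1}\}$. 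For $s\in S$, $\eta\le\xi(s)$ and $m\ge1$ put
$$G_{s,\eta,m}=A_s\cap\bigl\{g_{\eta+1}\ge\tfrac1m\bigr\}\cap\bigl\{g_\eta=0\bigr\},$$
a closed subset of $X$ lying in $A_s\cap L_{\eta+1}$; since $A_s\subseteq U_{\xi(s)+1}$ forces $\eta\le\xi(s)$ for every layer $L_{\eta+1}$ met by $A_s$, one gets $L_{\eta+1}=\bigcup\{G_{s,\eta,m}\colon\xi(s)\ge\eta,\ m\ge1\}$.

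\emph{Redistribution into discrete families, and the main obstacle.} It remains to recolour the sets $G_{s,\eta,m}$ into countably many discrete families. Fix a colouring $S\times\omega\to\omega$, $(s,m)\mapsto c(s,m)$, injective in $m$ for each fixed $s$, and, for a colour $c$, consider $\mathscr G_c=\{G_{s,\eta,m}\colon c(s,m)=c,\ \eta\le\xi(s)\}$. Any $x\in X$ lies in only finitely many $A_s$ by local finiteness of $\mathscr V$, each such $A_s$ contributing at most one member of colour $c$ per layer, while members from distinct layers $\eta$ are disjoint; hence each $\mathscr G_c$ is point-finite. Upgrading point-finiteness to \emph{discreteness} is the principal difficulty, and it is exactly here that local finiteness of $\mathscr V$ — whose members sit ``uniformly inside'' the $U_{\xi+1}$, unlike arbitrary level sets of the $g_\xi$ — rules out the boundary accumulation described above. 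I expect to carry this out by transfinite induction on $\alpha$: in the successor case $\alpha=\beta+1$ one peels off the functionally closed top layer $L_{\beta+1}=X\setminus U_\beta$ and recurses on the paracompact cozero (hence $F_\sigma$) subspace $U_\beta$, afterwards replacing each subspace-closed piece by its canonical $X$-closed representative from the first step; in the limit case one glues the inductively obtained decompositions of the $U_\eta$ ($\eta<\alpha$) along $\mathscr V$. (Equivalently, one invokes the paracompact-space result behind \cite[Theorem~2]{ChZen}.) Re-indexing the resulting sequence of discrete families and passing back through the first step delivers \ref{lem:sigmadd:it1}--\ref{lem:sigmadd:it3}.
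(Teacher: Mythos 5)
You have correctly reduced the lemma to its real content: since $\mathscr U$ is regular, the nonempty layers occur only at successor ordinals, each layer $U_{\zeta+1}\setminus U_\zeta$ is a countable union of functionally closed sets, so conditions (\ref{lem:sigmadd:it1}) and (\ref{lem:sigmadd:it3}) are cheap, the whole difficulty is (\ref{lem:sigmadd:it2}), and the upgrade from ``discrete family of closed sets'' to ``sfd-family'' via collectionwise normality is exactly the step the paper also uses. But what you offer for (\ref{lem:sigmadd:it2}) is a plan, not a proof. As you yourself observe, the colouring of the sets $G_{s,\eta,m}$ yields only point-finite families: for a fixed $s$ and $m$ the traces of the infinitely many layers $L_{\eta+1}$, $\eta\le\xi(s)$, inside $A_s$ can still accumulate, so fixing the colour does nothing against the boundary pile-up you describe. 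The proposed remedy --- transfinite induction on $\alpha$ --- is then dispatched in two lines, and its decisive case, limit $\alpha$, is reduced to the phrase ``glue along $\mathscr V$''. That gluing is precisely where discreteness has to be established: the inductively obtained families are discrete only in the subspaces $U_{\xi(s)+1}$, and after uniting over $s$ one gets, a priori, a locally finite family meeting finitely many layers near each point, not a discrete one. To close this one needs at least the cozero-exhaustion trick transferring subspace-discreteness to $X$ (which you invoke only in the successor step) together with the observation that a pairwise disjoint, locally finite family of closed sets is discrete, and a verification that the glued pieces stay closed and cover each layer; none of this appears. Nor can you simply ``invoke the result behind \cite[Theorem 2]{ChZen}'': the lemma under discussion is the functionally-open/paracompact variant of that statement and is what has to be proved here.

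For contrast, the paper avoids induction on $\alpha$ altogether and treats all limit stages simultaneously: starting from functionally open sets $G_{\xi,n}$ with $P_\xi=\bigcap_n G_{\xi,n}$, it iterates locally finite barycentric refinements $\mathscr V_i$, indexed by finite sequences $i\in\bigcup_{k}\omega^k$, of covers built from the $G_{\xi,n}$ and the sets $C_{\xi,i}$, and proves the star property that for every $x$ there is $i$ with ${\rm St}(x,\mathscr V_i)\subseteq U_{\mu(x)}$; setting $F_{\xi,n}=\overline{\{x\in P_\xi:{\rm St}(x,\mathscr H_n)\subseteq U_\xi\}}$ then gives the covering of each layer and the discreteness of each $\mathscr F_n$ in one stroke. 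Your skeleton could very likely be completed along the inductive route sketched above, but as written the heart of the lemma --- producing discrete, not merely point-finite or locally finite, families across all (in particular limit) stages --- is missing.
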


\begin{proof}
  For every $\xi\in[1,\alpha]$ we denote $P_\xi=U_\xi\setminus \bigcup_{\eta<\xi}U_\eta$. Since every $P_\xi$ is functionally $G_\delta$ in $X$ as a difference of two functionally open sets, we can choose a sequence $(G_{\xi,n})_{n\in\omega}$ of functionally open sets such that
  \begin{gather*}
  P_\xi=\bigcap_{n\in\omega} G_{\xi,n}\quad\mbox{for all}\,\,\, \xi\in[1,\alpha)\quad\mbox{and}\\
   G_{\xi,n}\subseteq U_{\xi} \quad\mbox{for all}\,\,\, \xi\in[1,\alpha), n\in\omega.
  \end{gather*}

We put
$$
I=\bigcup_{k\in\omega}\omega ^k
$$
and define by the induction on $k$ sequences $(\mathscr U_i:i\in I)$ and $(\mathscr V_i:i\in I)$ of open coverings of $X$ such that
\begin{enumerate}
  \item[(a)] $\mathscr U_{\emptyset}=\mathscr U$;

  \item[(b)]  $\mathscr V_i$ is a locally finite barycentric refinement of $\mathscr U_i$  for all $i\in \omega^k$;

  \item[(c)] for all $i\in\omega^k$ and $n\in\omega$ we have $\mathscr U_{(i,n)}=(U_{\xi,(i,n)}:\xi\in[0,\alpha))$, where
  $$
  C_{\xi,i}=\overline{\{x\in X:{\rm St}(x,\mathscr V_i)\subseteq \bigcup_{\eta<\xi}U_\eta\}}\quad \mbox{and}\quad U_{\xi,(i,n)}= G_{\xi,n} \setminus C_{\xi,i}
  $$
\end{enumerate}
for all $k\in\omega$. Let us observe that the existence of families $\mathscr V_i$ follows from the paracompactness of $X$ (see \cite[Theorem 5.1.12]{Eng-eng}).

Notice that
$$
C_{\xi,i} \subseteq \bigcup_{\eta<\xi}U_\eta,
$$
because $\mathscr V_i$ is an open covering of $X$. Therefore, since $(P_\xi:\xi\in[0,\alpha])$ is a partition of $X$, $\mathscr U_{(i,n)}$ defined in (c) covers $X$ for all $n\in\omega$.

For every $x\in X$ we put
  $$
  \mu(x)=\min\{\xi\in[0,\alpha):x\in U_\xi\}
  $$
  and show that
  \begin{gather}\label{gat:ttt1}
\forall x\in X\,\,\,\exists i\in I: \,\,\,  {\rm St}(x,\mathscr V_i)\subseteq U_{\mu(x)}.
  \end{gather}
Assume to the contrary that there exists $x\in X$ such that (\ref{gat:ttt1}) is not true. Since each family $\mathscr V_i$ is locally finite refinement of $\mathscr U$, for every $i\in I$ there is $\xi_i$ such that ${\rm St}(x,\mathscr V_i)\subseteq U_{\xi_i}$. Let $\xi(x)=\min\{\xi_i:i\in I\}$. Then $\xi(x)>\mu(x)$. Therefore, $x\not\in P_{\xi(x)}$ and we can take $j\in\omega$ such that $x\not\in G_{\xi(x),j}$.

From the definition of the sequence $\mathscr U_{(i,j)}$ it follows that $x\not\in U_{\xi(x),(i,j)}$. Since ${\rm St}(x,\mathscr V_i)\subseteq U_{\xi(x)}$, we have  $x\not\in U_{\xi,(i,j)}$ for all $\xi>\xi(x)$. Therefore,
\begin{gather}\label{gath:f}
x\not\in \bigcup_{\xi\ge\xi(x)}U_{\xi,(i,j)}
\end{gather}

By (b) there exists $\beta\in[0,\alpha)$ such that ${\rm St}(x,\mathscr V_{(i,j)})\subseteq U_{\beta,(i,j)}$. It follows from~(\ref{gath:f})  that $\beta<\xi(x)$. The inclusion $U_{\beta,(i,j)}\subseteq U_\beta$ contradicts to the choice of $\xi(x)$.

  Let $(\mathscr V_i:i\in I)=(\mathscr H_n:n\in\omega)$. Now for all $\xi\in[0,\alpha)$ and $n\in\omega$ we put
  \begin{gather*}
  D_{\xi,n}=\{x\in P_\xi: {\rm St}(x,\mathscr H_n)\subseteq U_\xi\}\quad\mbox{and}\quad   F_{\xi,n}=\overline{D_{\xi,n}}.
  \end{gather*}
We will show that
$$
P_\xi=\bigcup_{n\in\omega} F_{\xi,n}
$$
for all $\xi\in[0,\alpha)$. Property~(\ref{gat:ttt1}) implies that $P_{\xi}\subseteq \bigcup_{n\in\omega} F_{\xi,n}$. Now assume that $x\in F_{\xi,n}$ for some $\xi$ and $n$. Put $O={\rm St}(x,\mathscr H_n)\cap U_{\mu(x)}$. Then $O\cap D_{\xi,n}\ne\emptyset$. Take any $y\in O$. Since $y\in U_{\mu(x)}$ and $y\in P_\xi$, $\mu(x)\ge \xi$. The inclusions ${\rm St}(y,\mathscr H_n)\subseteq U_\xi$ and $y\in{\rm St}(x,\mathscr H_n)$ imply that $x\in U_\xi$. Hence, $\mu(x)\le\xi$. Therefore, $\mu(x)=\xi$. Then $x\in P_{\xi}$. Moreover, it follows that the family $\mathscr F_n=(F_{\xi,n}:\xi\in[0,\alpha))$ is discrete in $X$.

Since $X$ is paracompact, $X$ is collectionwise normal, which implies that $\mathscr F_n$ is strongly functionally discrete family for all $n\in\omega$.
 \end{proof}

 \section{An application of Lemma to classification  of fragmented functions}
Let $X$ be a topological space. Recall that a topological space $Y$ is
 \begin{itemize}
 \item \emph{an adhesive for $X$}, if for any disjoint functionally closed sets $A$ and  $B$ in $X$ and for any two continuous maps $f,g:X\to Y$ there exists a continuous map $h:X\to Y$ such that $h|_A=f|_A$ and $h|_B=g|_B$;

     \item \emph{\it a weak adhesive for $X$}, if for any two points   $y,z\in Y$ and  disjoint functionally closed sets   $A$ and $B$ in $X$ there exists a continuous map   $h:X\to Y$ such that $h|_A=y$ i $h|_B=z$;

   \item \emph{\it a locally weak adhesive for $X$}, if for every   $y\in Y$ and every neighborhood   $V\subseteq Y$  of $y$ there exists a neighborhood   $U$ of  $y$ such that $U\subseteq V$ and for every  $z\in U$ there exists a continuous map  $h:X\to V$ with $h|_A=y$ and $h|_B=z$.
 \end{itemize}
 It was proved in  \cite[Theorem 2.7]{Karlova:Mykhaylyuk:Stable} that any topological space   $Y$  is an adhesive for every strongly zero dimensional space $X$; a path-connected space  $Y$ is an adhesive for any compact space $X$ each point of which has a base of neighborhoods with discrete boundaries;  $Y$ is an adhesive for any space   $X$ if and only if   $Y$ is contractible. Moreover, it is easy to see that every (locally) path-connected space is a (locally) weak adhesive for any $X$.

A family $\mathscr B$ of subsets of a topological space $X$ is said to be {\it a base} for a map $f:X\to Y$, if for every open set $V\subseteq Y$ there exists a subfamily $\mathscr B_V$ of $\mathscr B$ such that $f^{-1}(V)=\bigcup_{B\in\mathscr B_V}B$.

A map $f:X\to Y$ is \emph{$\sigma$-discrete}, if there is a sequence $(\mathscr B_n)_{n\in\omega}$ of discrete families of sets in $X$ such that the family $\bigcup_{n\in\omega}\mathscr B_n$ is a base for $f$.

\begin{theorem}\label{th:bafs}
  Let $X$ be a paracompact space, $Y$ be a metric space and $f:X\to Y$ be a functionally fragmented map. Then
   \begin{enumerate}
     \item $f$ is $\sigma$-discrete and functionally $F_\sigma$-measurable;

     \item $f$ is a Baire-one function, if $Y$ is weak adhesive and weak locally adhesive   for $X$;

     \item $f$ is countably functionally fragmented, if $X$ is Lindel\"{o}ff.
   \end{enumerate}
\end{theorem}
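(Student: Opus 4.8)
\emph{The plan.} I would feed the families witnessing functional fragmentability of $f$ into Lemma~\ref{prop:sigmadd} and read off (1) and (3) fairly directly from the resulting $\sigma$-discrete decomposition; the real work is (2), which I would obtain by building continuous maps converging pointwise to $f$ with the help of the two adhesiveness hypotheses.

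\emph{Set-up and proof of (1).} For each $m\in\mathbb N$ fix a regular family $\mathscr U^{(m)}=(U^{(m)}_\xi:\xi\in[0,\alpha_m])$ of functionally open sets which is $(1/m)$-associated with $f$, and apply Lemma~\ref{prop:sigmadd} to get families $\mathscr F^{(m)}_n=(F^{(m)}_{\xi,n}:\xi<\alpha_m)$, $n\in\omega$, with $P^{(m)}_\xi:=U^{(m)}_\xi\setminus\bigcup_{\eta<\xi}U^{(m)}_\eta=\bigcup_n F^{(m)}_{\xi,n}$, each $\mathscr F^{(m)}_n$ an sfd-family and each $F^{(m)}_{\xi,n}$ closed and contained in $P^{(m)}_\xi$. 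Regularity forces $P^{(m)}_\xi$ to be empty unless $\xi$ is a successor, where $P^{(m)}_{\zeta+1}=U^{(m)}_{\zeta+1}\setminus U^{(m)}_\zeta$; hence the nonempty $P^{(m)}_\xi$ partition $X$ and ${\rm diam}\,f(P^{(m)}_\xi)<1/m$. Now $\mathscr B:=\bigcup_{m,n}\mathscr F^{(m)}_n$ is a countable union of discrete families, and it is a base for $f$: given open $V\subseteq Y$ and $x\in f^{-1}(V)$, choose $m$ with $B(f(x),1/m)\subseteq V$ and then $\xi,n$ with $x\in F^{(m)}_{\xi,n}$; since $f(F^{(m)}_{\xi,n})\subseteq f(P^{(m)}_\xi)\subseteq B(f(x),1/m)\subseteq V$ we get $F^{(m)}_{\xi,n}\subseteq f^{-1}(V)$, so $f^{-1}(V)$ is the union of the members of $\mathscr B$ it contains. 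Thus $f$ is $\sigma$-discrete. The same computation gives $f^{-1}(V)=\bigcup_{m,n}\bigl(\bigcup\{F^{(m)}_{\xi,n}:f(P^{(m)}_\xi)\subseteq V\}\bigr)$, so functional $F_\sigma$-measurability follows from two facts. First, each $F^{(m)}_{\xi,n}$ may be taken functionally $F_\sigma$: a minor strengthening of Lemma~\ref{prop:sigmadd}, obtained by choosing the auxiliary covers $\mathscr V_i$ in its proof to be locally finite cozero barycentric refinements (paracompactness permits this), so that $F^{(m)}_{\xi,n}$ becomes the intersection of the functionally-$F_\sigma$ set $P^{(m)}_\xi$ with the zero set $X\setminus\bigcup\{H\in\mathscr H_n:H\not\subseteq U^{(m)}_\xi\}$. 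Second, if $(F_\xi)_{\xi\in I}$ is an sfd-family of functionally $F_\sigma$ sets with a discrete family $(W_\xi)$ of cozero envelopes $W_\xi\supseteq F_\xi$, then $\bigcup_{\xi\in S}F_\xi$ is functionally $F_\sigma$ for every $S\subseteq I$: writing $F_\xi=\bigcup_k Z_{\xi,k}$ with $Z_{\xi,k}$ zero sets and completely separating the disjoint zero sets $Z_{\xi,k}$ and $X\setminus W_\xi$ by maps $\varphi_\xi\colon X\to[0,1]$ with $\varphi_\xi^{-1}(1)=Z_{\xi,k}$ and $\varphi_\xi^{-1}(0)=X\setminus W_\xi$, the locally finite sum $\sum_{\xi\in S}\varphi_\xi$ is continuous and has $\bigcup_{\xi\in S}Z_{\xi,k}$ as a zero set. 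Applying the second fact to each $\mathscr F^{(m)}_n$ makes every inner union above, hence $f^{-1}(V)$, functionally $F_\sigma$.

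\emph{Proof of (3).} If $X$ is Lindel\"of, every discrete family of nonempty subsets of $X$ is countable, so each $\mathscr F^{(m)}_n$ has only countably many nonempty members and $\{\xi:P^{(m)}_\xi\ne\emptyset\}$ is countable. Since $\mathscr U^{(m)}$ is constant on each interval of indices lying between consecutive nonempty layers, deleting the trivial layers re-indexes $\mathscr U^{(m)}$ over a countable ordinal without disturbing the nonempty layers, producing a countable $(1/m)$-associated regular family of functionally open sets. Hence $f$ is countably functionally fragmented.

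\emph{Proof of (2), the main obstacle.} After passing to successive common refinements of the $\mathscr U^{(m)}$ (which preserves regularity, functional openness and the diameter bound, and makes the partitions $\{P^{(m)}_\xi\}$ nested) choose $y^{(m)}_\xi\in f(P^{(m)}_\xi)$; then $d(y^{(m)}_\xi,f(x))<1/m$ for $x\in P^{(m)}_\xi$, so $f(x)=\lim_m y^{(m)}_{\xi_m(x)}$, where $\xi_m(x)$ is the index with $x\in P^{(m)}_{\xi_m(x)}$. I would then construct, by induction on $k$, continuous maps $f_k\colon X\to Y$ together with an increasing sequence of closed sets $L_k$ -- each a finite union of members of the discrete families $\mathscr F^{(j)}_n$ with $j,n\le k$, so that $\bigcup_k L_k=X$ -- such that $f_k$ agrees on $L_k$ with the map sending $x$ to $y^{(j)}_{\xi_j(x)}$ for the largest available level $j$; since that largest level tends to $\infty$ at each fixed point, this forces $f_k\to f$ pointwise. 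Passing from $f_k$ to $f_{k+1}$ means overwriting $f_k$, on a cozero neighbourhood of the finitely many new discrete families of closed sheets, by the appropriate new constants, which I would do by a finite chain of continuous interpolations, each carried out by the weak adhesive property applied to two disjoint functionally closed sets across which the map is to pass from one constant to another; the locally weak adhesive property is invoked at every such interpolation to keep the interpolating map inside a ball of radius controlled by $1/k$, so that the overwrite stays close to $f$ near the new sheets and pointwise convergence is not destroyed. The hard part, where essentially all the effort lies, is exactly this bookkeeping: arranging each overwrite to occur in a region where the current map is already locally constant (so that the weak adhesive property, which handles constants only, suffices), while simultaneously maintaining agreement with the ever-finer approximation of $f$ on $L_k$ and controlling the size of the overwrite. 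The remaining ingredients -- discrete cozero expansions of the closed sheets, pasting over a locally finite family -- are furnished by collectionwise normality of the paracompact space $X$. When $Y$ is path-connected and locally path-connected both hypotheses hold automatically, so this recovers the Baire-one conclusions of the theorems quoted in the introduction.
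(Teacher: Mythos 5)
Your parts (1) and (3) are essentially the paper's own argument: the same application of Lemma~\ref{prop:sigmadd} to the $(1/m)$-associated regular families of functionally open sets, producing a $\sigma$-(strongly functionally) discrete base of closed sets contained in the layers $P^{(m)}_\xi$, and the same counting argument in the Lindel\"{o}f case (discrete families are countable, so only countably many layers are nonempty, and the empty layers can be deleted). The one place you genuinely deviate in (1) is that, where the paper passes from $F_\sigma$- to functionally $F_\sigma$-measurability by quoting \cite[Proposition 2.6 (iv)]{K:EJM:2016}, you give a self-contained substitute: choosing the refinements $\mathscr V_i$ in the proof of Lemma~\ref{prop:sigmadd} to be locally finite cozero barycentric refinements makes $D_{\xi,n}=P_\xi\cap\{x:{\rm St}(x,\mathscr H_n)\subseteq U_\xi\}$ functionally $F_\sigma$ (a locally finite union of cozero sets is cozero), and a union of any subfamily of an sfd-family of functionally $F_\sigma$ sets is functionally $F_\sigma$ by your locally finite sum argument. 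That part is sound and is a reasonable alternative to the citation.

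The genuine gap is part (2). The paper obtains (2) in one line from (1) together with \cite[Theorem 3.2]{K:PIGC}, which is exactly the statement that a $\sigma$-discrete, functionally $F_\sigma$-measurable map into a space that is weak adhesive and locally weak adhesive for $X$ is Baire-one. You set out to prove this implication from scratch, but the inductive step --- modifying $f_k$ near an entire, generally transfinite, discrete family of closed sheets, by constants that vary from sheet to sheet, while preserving continuity, the agreement with the level-$j$ approximations on $L_k$, and the smallness of the change --- is never carried out; you yourself identify this ``bookkeeping'' as where essentially all the effort lies, and that effort is precisely the content of the cited theorem. Two concrete defects in the sketch as written: first, $L_k$ cannot be a \emph{finite union of members} of the families $\mathscr F^{(j)}_n$ if $\bigcup_k L_k=X$ is to hold, since these families are in general uncountable; what is needed is the union of \emph{all} members of the finitely many families with $j,n\le k$ (closed because each family is discrete). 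Second, the weak adhesive property only joins two constant values across two disjoint functionally closed sets, so it can be used only where the current map $f_k$ is already constant near the sheet being overwritten; nothing in your construction guarantees this local constancy, and arranging it simultaneously for all sheets of a discrete family, together with the $1/k$-control furnished by local weak adhesiveness, is the nontrivial heart of the matter. As it stands, (2) is a plan rather than a proof: either the induction must be executed in detail, or one should invoke a result of the type of \cite[Theorem 3.2]{K:PIGC}, as the paper does.
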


\begin{proof}
\textbf{1)} For every $n\in\mathbb N$ we choose a  family $\mathscr U_{1/n}(f)=(U_{\xi,n}:\xi\in[0,\alpha_n])$ consisting of functionally open sets $U_{\xi,n}$.
We claim that the family $\mathscr P=\bigcup_{n\in\mathbb N}\mathscr P_n$ is a base for $f$, where $\mathscr P_n=(U_{\xi,n}\setminus\bigcup_{\eta<\xi}U_{\eta,n}:\xi\in[0,\alpha_n])$, $n\in\mathbb N$. Indeed, fix an open set $V$ in $Y$ and take any $x\in f^{-1}(V)$. Find $n\in\mathbb N$ such that an open ball $B$ with the center at $f(x)$ and radius $1/n$ contains in $V$. Since $\mathscr P_n$ is a partition of $X$, there exists $\xi\in[0,\alpha_n]$ such that $x\in P_{\xi,n}=U_{\xi,n}\setminus\bigcup_{\eta<\xi}U_{\eta,n}$. Evidently, $f(P_{\xi,n})\subseteq B\subseteq V$.

By Lemma~\ref{prop:sigmadd} for every $n\in\mathbb N$ there exists a sequence $(\mathscr F_{n,k})_{k\in\omega}$ of families $\mathscr F_{n,k}=(F_{\xi,n,k}:\xi\in[0,\alpha_n])$ which satisfies conditions (\ref{lem:sigmadd:it1})--(\ref{lem:sigmadd:it3}) of Lemma~\ref{prop:sigmadd}.  Properties (\ref{lem:sigmadd:it1}) and (\ref{lem:sigmadd:it2}) imply that the family $\mathscr B=\bigcup\limits_{k,n} \mathscr F_{n,k}$ is a $\sigma$-discrete base for $f$ consisting of closed sets. It follows that $f$ is $F_\sigma$-measurable and  a $\sigma$-discrete map.
 Finally, \cite[Proposition 2.6 (iv)]{K:EJM:2016} implies that $f$ is functionally $F_\sigma$-measurable.

Property \textbf{2)} follows from 1) and \cite[Theorem 3.2]{K:PIGC}.

   \textbf{3)} It is enough to show that every regular sequence consisting of functionally open sets in a Lindel\"{o}ff space $X$ is countable.

   Let  $\mathscr U=(U_\xi:\xi\in[0,\alpha])$ be a regular covering of $X$ by functionally open sets $U_\xi$. There exists a sequence $(\mathscr F_n)_{n\in\omega}$  of   families   in $X$ such that conditions (\ref{lem:sigmadd:it1})--(\ref{lem:sigmadd:it3}) of Lemma~\ref{prop:sigmadd} are valid. Notice that every family $\mathscr F_n$ is at most countable, since it is discrete and $X$ is Lindel\"{o}ff. We consider an enumeration $\{F_k:k\in\omega\}$ of the family $\bigcup_{n\in\omega}\mathscr F_n$. Let $\varphi:[0,\alpha)\to 2^\omega$ be a map,
 $$
 \varphi(\xi)=\{k\in\omega: F_k\subseteq P_\xi\}.
 $$
Since $(\varphi(\xi):\xi\in[0,\omega_1))$ is a family of mutually disjoint subsets of $\omega$, it is at most countable.
\end{proof}

We do not know the answer to the following question.
\begin{question}
  Is it true that every fragmented Baire-one real-valued function defined on a paracompact Hausdorff space is functionally fragmented?
\end{question}

\end{document}